\documentclass[a4,11pt]{amsart}

\usepackage[misc]{ifsym}
\usepackage{latexsym,paralist}
\usepackage{amsmath}
\usepackage{amssymb}
\usepackage{amsthm}
\usepackage[mathcal]{eucal}
\usepackage{amsmath, amssymb,enumerate}
\usepackage{mathrsfs}
\usepackage{enumerate}
\usepackage[colorlinks=true, pdfstartview=FitV, linkcolor=blue, citecolor=blue,
 urlcolor=blue]{hyperref}
\usepackage{color}
\allowdisplaybreaks

 \makeatletter \@addtoreset{equation}{section}

\makeatother \makeatletter

\newtheorem{definition}{Definition}
\newtheorem{theorem}{Theorem}
\newtheorem{proposition}{Proposition}
\newtheorem{lemma}{Lemma}
\newtheorem{remark}{Remark}

\newtheorem{ex}{Example}

\newcommand{\R}{\mathbb{R}}

\newcommand{\C}{\mathbb{C}}

\newcommand{\N}{\mathbb{N}}

\newcommand{\eps}{\varepsilon}

\allowdisplaybreaks
\begin{document}
\title[Higher order Schr\"odinger operators]{Higher order Schr\"odinger operators}

\email{fgregorio@unisa.it}
\email{chiara.spina@unisalento.it} 
\email{ctacelli@unisa.it}
\thanks{The authors are members of the Gruppo Nazionale per l’Analisi Matematica, la Probabilità
 le loro Applicazioni (GNAMPA) of the Istituto Nazionale di Alta Matematica (INdAM). This paper is based on work supported by project ``Elliptic and parabolic problems, heat kernel estimates and spectral theory" CUP D53D23005580006, funded by European Union
Next Generation EU within the PRIN 2022 program (D.D. 104 - 02/02/2022 Ministero dell’Università e della Ricerca MUR)}

\subjclass{35K35, 35J10,  47D06, 35B65}
\keywords{Higher order Schr\"odinger equations, domain characterization}

\maketitle

\centerline{\scshape
Federica Gregorio$^{
1}$, Chiara Spina$^{2}$, Cristian Tacelli$^{1}$}

\medskip

{\footnotesize
 \centerline{$^1$Dipartimento di Matematica, Università degli Studi di Salerno, Fisciano, Italy}
} 

\medskip

{\footnotesize
 \centerline{$^2$Dipartimento di Matematica e Fisica "Ennio De Giorgi", Università del Salento, Lecce, Italy}
}

\begin{abstract}
In this paper we consider  higher order Schr\"odinger operators
$$\mathcal L u=Lu+Vu,$$ where $L$ denotes a fourth order operator and $V\geq 0$  a suitable potential. We initiate our analysis by considering the constant coefficients differential operator $L=\Delta^2$. Subsequently, we extend our results to more general operators $L$ featuring suitable variable coefficients. 
We are interested in domain characterization and generation properties of these operators in $L^p(\mathbb{R}^N)$ for $p \in (1, \infty)$. To address this problems we employ a noncommutative version of the Dore-Venni theorem due to Monniaux and Pr\"uss and we  prove that the $L^p$-realization of $\mathcal L$ is quasi sectorial and, consequently, generates an analytic semigroup. Furthermore, this approach allows for a sharp characterization of the operator's domain as the intersection of the domains of the bilaplacian  and the multiplication operator. The required assumptions  allow to treat potentials that grow at infinity like $|x|^r$ for some $r<4$.
\end{abstract}

\section{Introduction}

In this paper we investigate fourth order Schrödinger-type operators of the form 
$$\mathcal L u= Lu+Vu$$
with suitable positive, possibly unbounded,  potentials $V$. We first analyze the model operator $L=\Delta^2$ and then we extend our results to more general  operators $L$ with variable coefficients. Our focus is the elliptic and parabolic solvability of the associated problems in $L^p$ spaces for $1<p<\infty$, alongside a precise characterization of the operator's domain.

Second order Schrödinger operators of the form $\Delta-V$  with positive and possibly unbounded potentials  have been widely investigated. A landmark contribution is the work of Kato \cite{kato86} on generation results; further results regarding domain characterization under oscillation conditions or reverse Hölder inequalities assumptions on the potential $V$ can be found in \cite{she, aus-ben, met-pru-rha-sch}. These studies highlight that dissipativity, positivity, and domination principles are crucial for obtaining generation results in the second order case.  

However, replacing the laplacian with the bilaplacian introduces significant analytical challenges. While these higher order operators are physically relevant—appearing in models of elasticity \cite{mel}, free boundary problems \cite{ada}, and nonlinear elasticity \cite{ant}—they necessitate a fundamentally different approach due to the failure of maximum principles, positivity, and dissipativity (see \cite{lan-maz}). Semigroup generation in $L^p(\mathbb{R}^N)$ for $(-\Delta)^m + V$ has been established in \cite{dav-hin98b} for potentials in suitable higher order Kato classes (see also \cite{hua-wan-zhe-dua}).

In the present work, we consider potentials that may grow at infinity like $|x|^r$, with $r<4$, which do not belong to Kato classes. To establish generation results and domain characterizations, we employ a Dore-Venni type approach for the sum of two non-commuting closed operators satisfying instead a specific commutator condition. In particular, our main results rely on the framework developed by Monniaux and Prüss in \cite{mon-pru}.
Once verified sectoriality and bounded imaginary powers properties for the bilaplacian and the multiplication operator, the main point consists in the proof of the commutator estimate which is possible by imposing regularity and growth conditions on the derivatives of the potential up to the third order. Specifically, we assume that 
$V\in W^{3,1}_{\mathrm{loc}}(\R^N)  $ is a non-negative function such that there exist constants
$\alpha \in \left[0,\frac34\right)$ and  $c>0$ 
satisfying
\begin{equation}\label{condV}
|D_iV|\leq c V^\alpha , \quad |D_{ij}V|\leq c V^\alpha, \quad |D_{ijk}V|\leq c V^\alpha\quad  \text{a.e. in}\
\R^N, \forall\,1\leq i,j,k\leq N.\end{equation}

 Applying  \cite[Theorem 1 and Corollary 2]{mon-pru}, we demonstrate that the sum operator is quasi-sectorial. Furthermore, by analyzing the sectoriality angle, we prove that $\mathcal{L}$ generates an analytic semigroup. Finally, we provide a precise description of the domain as the intersection of the domains of the principal part and the potential operator seen as a multiplication operator.
 A similar technique has been employed in \cite{kun-lor-mai-rha} to study the $L^p$-theory for Schr\"odinger systems in the second order case. 

 The paper is organized as follows. In Section \ref{prelim} we recall some fundamental definitions and establish the sectoriality and bounded imaginary powers properties for the operators involved.
  Section \ref{constantcoeff} deals with the generation and domain characterization for the model operator $$\mathcal L u=\Delta^2u+Vu.$$ 
  Building on the techniques developed in the constant coefficient case, in Section \ref{noncostcoeff}, we extend our results to more general operators $$Lu = \sum_{|\alpha|, |\beta| \leq 2} D^{\alpha} a_{\alpha, \beta} D^\beta u.$$ We assume $a_{\alpha,\beta} \in BUC(\mathbb{R}^N;\R) \cap W^{2,\infty}(\mathbb{R}^N; \mathbb{R})$ for $|\alpha|=|\beta|=2$, and $a_{\alpha,\beta} \in L^{\infty}(\mathbb{R}^N; \mathbb{R})$ otherwise, alongside a suitable ellipticity condition.

\medskip
\textbf{Notations.} We use standard notations for function spaces. We denote by $L^p(\R^N)$ and $W^{k,p}(\R^N)$ the standard Lebesgue and Sobolev spaces, respectively. $C_c^\infty(\R^N)$ is the space of test functions and $C_b(\R^N)$ is the space of bounded continuous functions. 

\section{Some preliminary results}\label{prelim}

In this section, we establish some preliminary results that will be essential for the subsequent analysis.
We first recall the definitions of sectorial operators and operators  with bounded imaginary powers.
For $0<\varphi\leq \pi$ we denote by $\Sigma_\varphi = \{ z\in \C\setminus\{0\}: |\arg z| < \varphi\}$ the open sector of angle $\varphi$ in the complex plane and set $\Sigma_0=(0,+\infty)$. 

\begin{definition}
A linear operator $A$ on a Banach space $X$ is called sectorial (of angle $\varphi)$ if 
there exists an angle $\varphi \in (0,\pi)$ such that
$\sigma (A)\subset\overline \Sigma_\varphi $
and
$$M_\varphi = \sup_{z \in \overline \Sigma_\varphi^c} \|zR(z,A)\|_{L(X)} < \infty$$
where   $R(z,A)=(z-A)^{-1}$.

The angle of sectoriality ({or the spectral angle}) $\varphi_A$ of a sectorial operator $A$ is defined as the infimum of all $\varphi\in (0,\pi)$ such that
$A$ is sectorial of angle $\varphi$, that is
\begin{eqnarray*}
\varphi_A =\inf\{ \varphi \in [0, \pi) : \sigma(A)\subset \overline\Sigma_{\varphi} \mbox{ and } M_{\varphi} < \infty\}.
\end{eqnarray*}
\end{definition}
It is well known, see e.g. \cite[Theorem II.4.6]{eng-nag}, that a closed, densely defined operator $-A$   generates a bounded analytic semigroup if and only if
$A$ is sectorial with spectral angle $\varphi_A < \frac{\pi}{2}$. The operator $A$ is called quasi-sectorial if $\nu+ A$ is sectorial
for some $\nu > 0$.

If $A$ is sectorial of angle $\varphi$ we have that $\rho(A)\supset \overline \Sigma^c_\varphi=-\Sigma_{\pi-\varphi} $
and then $\rho (-A)\supset \Sigma_{\pi-\varphi}$. Hence, we can also write
$$M_\varphi = \sup_{\lambda \in \Sigma_{\pi-\varphi}}\|\lambda R(\lambda,-A)\|_{L(X)}=
\sup_{\lambda \in  \Sigma_{\pi-\varphi}}\|\lambda (\lambda+A)^{-1}\|_{L(X)} $$
and
\begin{eqnarray*}
\varphi_A =\inf\{ \varphi \in [0, \pi) : \Sigma_{\pi-\varphi} \subset \rho (-A) \mbox{ and } M_{\varphi} < \infty\}.
\end{eqnarray*}

\begin{definition}
A sectorial linear operator $A$ on a Banach space $X$, injective, densely defined and with dense range is said to admit \emph{bounded imaginary powers} (BIP) if $A^{is}$ defines a bounded operator on $X$ for all $s \in \R$ and there exists $\theta\in [0,\pi)$ such that 
\[
|A^{is}|\leq Ke^{\theta |s|},\quad s\in \R
\]
for some constant $K\geq 1$. 
The \emph{power angle} $\theta_A$ of $A$ is defined by 
\begin{eqnarray*}
\theta_A =\inf\{ \omega \geq 0 : \exists\, M : \|A^{is}\| \leq M e^{\omega |s|}\;\, \forall\, s \in \R\}.
\end{eqnarray*}
\end{definition}

\begin{remark}
By  Pr\"uss-Sohr theorem (\cite{pru-soh}) the spectral angle and the power angle satisfy the following relation $\theta_A \geq \varphi_A$. 
\end{remark}

We assume that the potential $V$ satisfies the following assumptions. $V:\R^N\to\R$, $0\leq V\in W^{3,1}_{\mathrm{loc}}(\R^N)  $,   and there exist constants
$\alpha \in \left[0,\frac34\right)$ and  $c>0$ 
such that 
\eqref{condV}
holds.  For $p\in (1,\infty)$ we define the multiplication operator $V_p$ on $L^p(\R^N)$
by setting $D_p(V) = \{u \in L^p(\R^N) : Vu \in L^p(\R^N)\}$ and $(V_pu)(x)=V(x)u(x)$.

After choosing $\omega_1>0$ and $\omega_2>0$ in order that the operators  $\Delta^2+\omega_1 I$ and  $V_p+\omega_2I$ are invertible, we
set $A_p=\Delta^2+\omega_1 I$ with $D_p(A) = W^{4,p}(\R^N)$ and  $B_p=V_p+\omega_2I$. 
The operators $A_p$ and $B_p$ satisfy the following properties.

\begin{proposition}\label{properties}
Let $1<p<\infty$.
\begin{enumerate}
[(a)]
\item The operator $A_p$ is invertible, sectorial and admits bounded imaginary powers. Its power angle is 0. Consequently, for every
$\vartheta >0$ there exists a constant $c$ such that for $s \in \R$ and $\lambda \in \Sigma_{\pi-\vartheta}$ we have
\begin{eqnarray*}
\|(\lambda + A_p)^{-1}\|_{L^p} \leq \frac{c}{1+|\lambda|},\qquad
\|A_p^{is}\|_{L^p} \leq ce^{\vartheta |s|}.
\end{eqnarray*}
\item The operator $B_p$  is invertible and admits bounded imaginary powers. Its power angle is 0. Consequently,
for every
$\vartheta >0$ there exists a constant $c$ such that for $s \in \R$ and $\lambda \in \Sigma_{\pi-\vartheta}$ we have
\begin{eqnarray*}
\|(\lambda + B_p)^{-1}\|_{L^p} \leq \frac{c}{1+|\lambda|},\qquad
\|B_p^{is}\|_{L^p} \leq ce^{\vartheta |s|}.
\end{eqnarray*}
\end{enumerate}
\end{proposition}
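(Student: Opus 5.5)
For part (a) we would work with the Fourier transform. Writing $\mathcal F$ for the Fourier transform on $\R^N$, the operator $A_p$ is the Fourier multiplier with symbol $m(\xi)=|\xi|^4+\omega_1$. Since $m\ge \omega_1>0$, the inverse $A_p^{-1}$ is the multiplier with symbol $(|\xi|^4+\omega_1)^{-1}$. This symbol satisfies the Mikhlin condition $|\xi|^{|\gamma|}|\partial^\gamma m^{-1}(\xi)|\le C_\gamma$ for all multi-indices $|\gamma|\le N+2$ (say), so $A_p^{-1}$ is bounded on $L^p$. Moreover, $(\omega_1+|\xi|^4)^{-1}$ and $|\xi|^4(\omega_1+|\xi|^4)^{-1}$ are Mikhlin multipliers as well. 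Hence the range of $A_p^{-1}$ is exactly $W^{4,p}(\R^N)$, and $A_p$ is closed, densely defined, invertible, with dense range.

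For the resolvent, fix $\vartheta>0$ and $\lambda\in\Sigma_{\pi-\vartheta}$. The symbol of $(\lambda+A_p)^{-1}$ is $(\lambda+\omega_1+|\xi|^4)^{-1}$. Since $\omega_1+|\xi|^4\in(0,\infty)$ and $|\arg\lambda|<\pi-\vartheta$, the elementary inequality $|\lambda+t|\ge \sin(\vartheta/2)\,(|\lambda|+t)$ for $t>0$ gives
\[
\Big|\frac{1+|\lambda|}{\lambda+\omega_1+|\xi|^4}\Big|\le C_\vartheta .
\]
Each $\xi$-derivative produces factors $|\xi|^3/(\lambda+\omega_1+|\xi|^4)$ and their iterates. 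These are bounded by $C|\xi|^{-1}$ uniformly in $\lambda$, since $|\xi|^4\le |\lambda+\omega_1+|\xi|^4|/\sin(\vartheta/2)$. So $(1+|\lambda|)(\lambda+A_p)^{-1}$ is a Mikhlin multiplier with constants independent of $\lambda$, which gives the first estimate.

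For the imaginary powers, $A_p^{is}$ is the multiplier $(\omega_1+|\xi|^4)^{is}$. Its derivatives satisfy
\[
|\xi|^{|\gamma|}|\partial^\gamma (\omega_1+|\xi|^4)^{is}|\le C(1+|s|)^{|\gamma|},
\]
because each derivative brings a factor $is\cdot 4|\xi|^2\xi_j/(\omega_1+|\xi|^4)$ or a harmless lower-order term. Mikhlin's theorem then yields $\|A_p^{is}\|\le C(1+|s|)^{N+2}\le c_\vartheta e^{\vartheta|s|}$ for every $\vartheta>0$. This is exactly power angle $0$. By the Pr\"uss--Sohr remark the spectral angle is then also $0$, which is consistent with the resolvent bound. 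The main bookkeeping point here is checking that the polynomial growth in $s$ of the Mikhlin constants is absorbed by $e^{\vartheta|s|}$, which it clearly is.

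For part (b), everything is pointwise. The operator $B_p$ is multiplication by $b(x)=V(x)+\omega_2\ge\omega_2>0$, with the maximal domain. It is closed and invertible with inverse multiplication by $1/b$, of norm at most $1/\omega_2$. It is densely defined because $C_c^\infty\subset D_p(V)$: here $V\in W^{3,1}_{\rm loc}$, and by \eqref{condV} $V$ is locally bounded, but truncations $u\chi_{\{V\le n\}}$ suffice anyway. Its range is dense since it is all of $L^p$. The resolvent $(\lambda+B_p)^{-1}$ is multiplication by $(\lambda+b(x))^{-1}$. The same sector inequality as above gives
\[
\sup_x |(1+|\lambda|)(\lambda+b(x))^{-1}|\le C_\vartheta(1+\omega_2^{-1}).
\]
Finally, $B_p^{is}$ is multiplication by $b(x)^{is}=e^{is\log b(x)}$, which has modulus $1$. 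Hence $\|B_p^{is}\|\le 1$ and the power angle is $0$. The only slightly delicate point in the whole proof is justifying that the functional-calculus definition of $A^{is}$ agrees with the multiplier $m^{is}$ (respectively $b^{is}$). This follows since both are defined via the same Dunford integral applied to a multiplication operator on the Fourier (respectively physical) side.
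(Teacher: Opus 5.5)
Your proof is correct, but it argues from scratch where the paper argues by citation.

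For (a), the paper invokes Kunstmann--Weis \cite{kun-wei} to get a bounded $H^\infty$-calculus for $\Delta^2+\omega_1$ on every sector $\Sigma_\vartheta$, $\vartheta>0$. It then evaluates the calculus at $z^{is}$, using $\sup_{\Sigma_\vartheta}|z^{is}|\le e^{\vartheta|s|}$; the resolvent bound comes with the cited result. You instead verify Mikhlin's condition by hand:
\begin{itemize}
\item uniformly in $\lambda\in\Sigma_{\pi-\vartheta}$ for $(1+|\lambda|)(\lambda+\omega_1+|\xi|^4)^{-1}$, via $|\lambda+t|\ge\sin(\vartheta/2)(|\lambda|+t)$;
\item with constants $O((1+|s|)^{k})$ for $(\omega_1+|\xi|^4)^{is}$.
\end{itemize}
This is self-contained and even gives the sharper polynomial bound $\|A_p^{is}\|\le C(1+|s|)^{k}$. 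The citation buys a stronger structural property, the bounded $H^\infty$-calculus, in one line. It also matches the pattern the paper reuses in Section 4, where your Fourier-multiplier argument would not apply to variable coefficients and the authors switch to Duong--Simonett \cite{duo-sim}.

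For (b), your pointwise computation ($|b^{is}|=1$ and $|\lambda+b|\ge\sin(\vartheta/2)(|\lambda|+\omega_2)$) is essentially what the paper's citation of Pr\"uss--Sohr \cite{pru-soh} provides, so there the two routes coincide.

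One minor imprecision: to conclude that the range of $A_p^{-1}$ is all of $W^{4,p}(\R^N)$, you need $\xi^\gamma(\omega_1+|\xi|^4)^{-1}$ to be a Mikhlin multiplier for every $|\gamma|\le 4$, not only $|\xi|^4(\omega_1+|\xi|^4)^{-1}$. This holds by the same check.
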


\begin{proof}

(a) By 
\cite[Proposition 13.11]{kun-wei} the operator $A_p$ has  bounded $H^\infty$-calculus on $\Sigma_{\vartheta}$ for every
$\vartheta >0$ (here $A_p$ is $(M,\omega_0)-$elliptic with $M=1$ and $\omega_0=0$, see \cite[Section 6]{kun-wei}).   Since for every $s \in \R$ the function $f(z) = z^{is}$ is bounded and holomorphic on that sector,
and $\|f\|_{L^\infty(\Sigma_\vartheta)}\leq e^{|s|\vartheta}$, the boundedness of the imaginary powers follows.

(b) First observe that $B_p$ is closed, with dense domain, dense range, and it is injective since $B=V+\omega_2\geq \omega_2>0$. Moreover $rg(B)\in [\omega_2,\infty)\subset \Sigma_\vartheta$ for every $\vartheta>0$.  Then the thesis follows from \cite[Example 3]{pru-soh}.
\end{proof}

A similar result also holds for a more general fourth order operator with variable coefficients. In particular, consider the operator
\begin{equation*}
    L=\sum_{|\alpha|\leq 2,
          |\beta|\leq 2 }
    D^{\alpha}a_{\alpha, \beta}D^\beta
\end{equation*}
with
\[
\begin{array}{ll}
     a_{\alpha,\beta}\in BUC(\R^N;\R)\cap W^{2,\infty}(\R^N;\R) & \text{ if } |\alpha|=|\beta|=2  \\
     a_{\alpha,\beta}\in L^{\infty} (\R^N;\R) & \text{ if } |\alpha|,|\beta|<2
\end{array}
\]
satisfying the following ellipticity condition: there exists $\nu>0$ such that

\[\left| \sum_{|\alpha|=|\beta|=2}
   a_{\alpha,\beta}(x)\xi^{\alpha+\beta}+r^4e^{i\theta}
   \right|\geq \nu(|\xi|^4+r^4)
\]
for each $x\in \R^N$, $\xi\in \R^N$ and $r\geq 0$. 
As before  we choose $\omega_1>0$  in order that the operator  $L_p+\omega_1 I$ is invertible and
set $A_p=L_p+\omega_1 I$ with $D_p(A) = W^{4,p}(\R^N)$. Then, the following result holds. 

\begin{proposition}\label{properties-gen}
Let $1<p<\infty$.
The operator $A_p$ is invertible, sectorial and admits bounded imaginary powers with power angle $\theta_{A}<\pi/2$.  Consequently, for every
$\vartheta >\theta_{A}$ there exists a constant $c$ such that for $s \in \R$ and $\lambda \in \Sigma_{\pi-\vartheta}$ we have
\begin{eqnarray*}
\|(\lambda + A_p)^{-1}\|_{L^p} \leq \frac{c}{1+|\lambda|},\qquad
\|A_p^{is}\|_{L^p} \leq ce^{\vartheta |s|}.
\end{eqnarray*}
\end{proposition}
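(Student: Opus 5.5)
The plan is to follow the proof of Proposition \ref{properties}(a): get a bounded $H^\infty$-calculus for $A_p$ from the Kunstmann--Weis theory of elliptic operators, and read off BIP and the resolvent bound from it.

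\textbf{Step 1: bring $L$ into the framework of \cite[Section 6]{kun-wei}.} The ellipticity assumption is the parameter-ellipticity condition used there. Taking $r=0$ gives $|\sum a_{\alpha,\beta}(x)\xi^{\alpha+\beta}|\ge \nu|\xi|^4$. Allowing all $r\ge 0$ and the admissible $\theta$ shows that the principal symbol $a_4(x,\xi)=\sum_{|\alpha|=|\beta|=2}a_{\alpha,\beta}(x)\xi^{\alpha+\beta}$ takes values, for $|\xi|=1$, in a closed sector $\overline\Sigma_{\omega_0}$ with $\omega_0<\pi/2$, uniformly in $x$. Hence the principal part is $(M,\omega_0)$-elliptic in the sense of \cite{kun-wei}. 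The coefficient conditions are exactly what that theory needs:
\begin{itemize}
\item top-order coefficients in $BUC\cap W^{2,\infty}$, so that the divergence-form operator $D^\alpha a_{\alpha,\beta}D^\beta$ can be rewritten in nondivergence form with bounded lower-order coefficients;
\item lower-order coefficients in $L^\infty$.
\end{itemize}

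\textbf{Step 2: apply \cite[Proposition 13.11]{kun-wei}.} After expanding $D^\alpha(a_{\alpha,\beta}D^\beta u)$, $L_p$ is a nondivergence operator with uniformly continuous leading coefficients. By \cite[Proposition 13.11]{kun-wei}, together with the perturbation results there for lower-order terms, $L_p+\omega_1$ with domain $W^{4,p}(\R^N)$ has a bounded $H^\infty(\Sigma_\vartheta)$-calculus for every $\vartheta>\omega_0$, once $\omega_1$ is large.

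The lower-order terms need an argument, since they are only relatively bounded with bound zero. Terms with $|\alpha|+|\beta|\le 3$ are $A$-bounded with arbitrarily small relative bound, via the interpolation inequality $\|u\|_{W^{3,p}}\le \varepsilon\|u\|_{W^{4,p}}+C_\varepsilon\|u\|_p$. They can then be absorbed by enlarging $\omega_1$, using the standard perturbation theorem for $H^\infty$-calculus under lower-order perturbations (Kalton--Kunstmann--Weis, as cited in \cite{kun-wei}). Invertibility comes from the choice of $\omega_1$.

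\textbf{Step 3: conclude.} Each $z^{is}$ is bounded and holomorphic on $\Sigma_\vartheta$ with $\|z^{is}\|_{L^\infty(\Sigma_\vartheta)}\le e^{\vartheta|s|}$. This gives $\|A_p^{is}\|\le c\,e^{\vartheta|s|}$ for all $\vartheta>\omega_0$, so $\theta_A\le\omega_0<\pi/2$. By the Pr\"uss--Sohr remark, $\varphi_A\le\theta_A$. Sectoriality with angle below $\vartheta$ then yields $\|(\lambda+A_p)^{-1}\|\le c/|\lambda|$ on $\Sigma_{\pi-\vartheta}$. Combining this with invertibility, $\|(\lambda+A_p)^{-1}\|\le C$ for small $|\lambda|$, which gives the bound $c/(1+|\lambda|)$.

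\textbf{Main obstacle.} The delicate point is Steps 1--2: translating the paper's ellipticity inequality into the exact angle hypothesis of \cite{kun-wei}, and handling the divergence-form lower-order terms with $|\alpha|=2$, $|\beta|<2$ or the reverse. For the latter, the first attempt would be to use $a_{\alpha,\beta}\in W^{2,\infty}$ only when $|\alpha|=|\beta|=2$. For mixed orders with merely $L^\infty$ coefficients, I would treat $D^\alpha a D^\beta$ through an $L^p$ estimate by duality, or interpret $L$ on $W^{4,p}$ only where it is defined. This may require assuming the mixed terms effectively have $|\alpha|\le1$.
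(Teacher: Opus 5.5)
Your overall architecture matches the paper's. You obtain a bounded $H^\infty(\Sigma_\vartheta)$-calculus for $A_p=L_p+\omega_1$ for every $\vartheta$ larger than the ellipticity angle $\omega_0<\pi/2$. From it you read off $\|A_p^{is}\|\le c\,e^{\vartheta|s|}$ and $\|(\lambda+A_p)^{-1}\|\le c/(1+|\lambda|)$. Your Steps 1 and 3 do this correctly.

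The gap is in Step 2, where all the content lies. You attribute the $H^\infty$-calculus of an operator whose \emph{leading} coefficients depend on $x$ and are merely uniformly continuous to \cite[Proposition 13.11]{kun-wei}. But the paper uses that result only in Proposition~\ref{properties}(a), for the constant-coefficient operator $\Delta^2$. The only perturbation argument you actually supply covers the terms of order at most $3$. Variable top-order coefficients cannot be absorbed that way. Writing $a_{\alpha,\beta}(x)=a_{\alpha,\beta}(x_0)+\bigl(a_{\alpha,\beta}(x)-a_{\alpha,\beta}(x_0)\bigr)$ produces a fourth-order perturbation that is small only near $x_0$. It satisfies no bound like $\|Bu\|_p\le C\|u\|_p^{1/4}\|u\|_{W^{4,p}}^{3/4}$, so the lower-order perturbation theorem does not apply. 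Freezing coefficients and localizing works for resolvent estimates. Getting a functional calculus, or even bounded imaginary powers, for $BUC$ leading coefficients needs a separate argument, and your proposal does not provide one.

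The paper's proof supplies exactly this missing input by citing \cite[Theorem 6.1]{duo-sim}. That result gives a bounded $H^\infty$-calculus, after a large enough shift, on every sector larger than the ellipticity angle. It applies to elliptic operators of order $2m$ on $L^p(\R^N)$ with bounded uniformly continuous top-order coefficients and bounded measurable lower-order coefficients. If you use it in place of your Step 2, Steps 1 and 3 go through unchanged, and the separate lower-order perturbation becomes unnecessary. If you keep that perturbation anyway, note that relative bound zero is not enough to perturb an $H^\infty$-calculus. What works is the decay $\|B(\lambda+A_p)^{-1}\|\le C(1+|\lambda|)^{-1/4}$, which the multiplicative form of your interpolation inequality gives.

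Your concern about the divergence form is legitimate, and the paper does not address it. The reduction to nondivergence form uses $a_{\alpha,\beta}\in W^{2,\infty}$ for $|\alpha|=|\beta|=2$. A term $D^\alpha a_{\alpha,\beta}D^\beta$ with $|\alpha|\ge1$ and $a_{\alpha,\beta}$ only in $L^\infty$ does not map $W^{4,p}(\R^N)$ into $L^p(\R^N)$. So your suggested fallback $|\alpha|\le1$ is still not enough; you need $|\alpha|=0$ or $a_{\alpha,\beta}\in W^{|\alpha|,\infty}$.
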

\begin{proof}
The thesis follows by \cite[Theorem 6.1]{duo-sim}. 
\end{proof}

For ease of reference, let us  state here \cite[Corollary 2]{mon-pru}.
\begin{proposition}{\cite[Corollary 2]{mon-pru}}\label{cor2MP}
    Let $A$ and $B$ be closed, linear, densely defined operators in $L^p(\R^N), 1<p<\infty$, which are sectorial and admit bounded imaginary powers, and let $A$ be invertible. Assume that $\theta_A+\theta_B<\pi$, and fix angles $\vartheta_A>\theta_A,\vartheta_B>\theta_B$ such that $\vartheta_A+\vartheta_B<\pi$.  If $A$ and $B$ satisfy the following commutator condition 
    \[\|A(\lambda+A)^{-1}\left[A^{-1}(\mu + B)^{-1} - (\mu+B)^{-1}A^{-1}\right]\|_{L^p}\leq\frac{C}{(1+|\lambda|^{1-\gamma})|\mu|^{1+\beta}}\]
    for some $C\geq0,$ $0\leq \gamma<\beta\leq 1$ for all $\lambda\in \Sigma_{\pi-\vartheta_A},\mu\in\Sigma_{\pi-\vartheta_B}$,
    then $A+B$ with domain $D(A)\cap D(B)$ is closed, and there is a number $\nu_0\geq0$ such that $\nu+A+B$ is invertible for every $\nu>\nu_0$, and there is a constant $c>0$ such that 
    \[|(\nu+A+B)^{-1}|\leq\frac{c}{\nu}\quad \text{for all}\ \nu>\nu_0.\]
    In particular, the operator $\nu_0+A+B$ is sectorial.
\end{proposition}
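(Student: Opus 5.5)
The plan is to follow the Dore--Venni / Da Prato--Grisvard strategy and treat the non-commutativity as a small perturbation that disappears for large $\nu$. For $\nu>0$ I would take as candidate inverse of $\nu+A+B$ the operator
\[
S_\nu=\frac{1}{2\pi i}\int_{\Gamma}(\lambda+A)^{-1}(\nu-\lambda+B)^{-1}\,d\lambda ,
\]
where $\Gamma$ is a path running from $\infty e^{-i(\pi-\vartheta_A)}$ to $\infty e^{i(\pi-\vartheta_A)}$, shifted so that it passes to the right of $0$. Both resolvents exist along $\Gamma$, because $\vartheta_A+\vartheta_B<\pi$ lets $\lambda$ stay in $\Sigma_{\pi-\vartheta_A}$ while $\nu-\lambda$ stays in $\Sigma_{\pi-\vartheta_B}$. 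The integral converges absolutely in $L(L^p)$, since the two resolvent bounds (from sectoriality) give a product of order $|\lambda|^{-1}|\nu-\lambda|^{-1}$. The same estimates give $\|S_\nu\|\le c/\nu$. If $A$ and $B$ commuted, $S_\nu$ would be exactly $(\nu+A+B)^{-1}$.

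The first key step is to show that $AS_\nu$, and hence $BS_\nu$, is bounded on $L^p$ uniformly in $\nu$. This is the Dore--Venni part of the argument. I would represent $AS_\nu$ through the imaginary powers, moving to a Mellin-type representation $\int_{\mathbb R} A^{is}(\nu+B)^{-is}\,\frac{ds}{\sinh(\pi s)}$ plus lower-order terms. The bounds $\|A^{is}\|\le ce^{\vartheta_A|s|}$ and $\|B^{is}\|\le ce^{\vartheta_B|s|}$, together with $\vartheta_A+\vartheta_B<\pi$, then give an integrable kernel away from $s=0$. The singular part at $s=0$ is a Hilbert transform, and it is handled by transference using that $L^p$, $1<p<\infty$, is UMD. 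Here one must be careful with the order of the factors, because they do not commute; the representation has to be set up so that every operator product is genuinely present in $S_\nu$ and is not obtained by commuting factors.

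The second step is to compute $(\nu+A+B)S_\nu$ and $S_\nu(\nu+A+B)$. Writing $\nu+A+B=(\lambda+A)+(\nu-\lambda+B)$ inside the integral, in the commuting case one gets $I$ exactly. In general one gets $I+R_\nu$ (respectively $I+R'_\nu$ on $D(A)\cap D(B)$), where the remainder collects commutators of $A$ with $(\nu-\lambda+B)^{-1}$. These can be rewritten as
\[
A(\lambda+A)^{-1}\bigl[A^{-1}(\mu+B)^{-1}-(\mu+B)^{-1}A^{-1}\bigr]A(\mu+B)^{\ldots},\qquad \mu=\nu-\lambda .
\]
The assumed commutator bound then controls the integrand by $C/\bigl((1+|\lambda|^{1-\gamma})|\nu-\lambda|^{1+\beta}\bigr)$. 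Integrating over $\Gamma$ and rescaling $\lambda=\nu z$ should give $\|R_\nu\|,\|R'_\nu\|\le C\nu^{-(\beta-\gamma)}$. The hypothesis $\gamma<\beta$ is precisely what makes the rescaled integral converge at infinity and decay in $\nu$.

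To conclude, choose $\nu_0$ so that $\|R_\nu\|,\|R'_\nu\|<1/2$ for $\nu>\nu_0$. Then $(\nu+A+B)S_\nu(I+R_\nu)^{-1}=I$, so $\nu+A+B$ is surjective with range in $D(A)\cap D(B)$. Also $(I+R'_\nu)^{-1}S_\nu(\nu+A+B)=I$ on $D(A)\cap D(B)$, which gives injectivity and the inverse bound $c/\nu$. Closedness of $A+B$ on $D(A)\cap D(B)$ follows from the boundedness of $AS_\nu$ and $BS_\nu$, which yields the graph-norm equivalence $\|Au\|+\|Bu\|\lesssim\|(\nu+A+B)u\|+\|u\|$. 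Sectoriality of $\nu_0+A+B$ then comes from the $c/\nu$ bound combined with the usual extension to a sector.

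I expect the main obstacle to be the algebraic manipulation that expresses the remainder $R_\nu$ exactly in the form controlled by the commutator hypothesis. In particular, one has to absorb the extra factor of $A$, which comes from writing $A=A(\lambda+A)^{-1}(\lambda+A)$, without losing integrability; the uniform UMD bound for $AS_\nu$ is the second delicate point.
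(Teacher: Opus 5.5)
The paper gives no proof of this statement; it quotes it from Monniaux--Pr\"uss \cite[Corollary 2]{mon-pru}. So your sketch has to stand on its own as a proof of their theorem. The architecture you chose (Da Prato--Grisvard integral, a Dore--Venni bound, a commutator remainder, a Neumann series) is the natural one.

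The step you call the Dore--Venni part is not just delicate; as proposed it is wrong. You derive boundedness of $AS_\nu$ from BIP, $\vartheta_A+\vartheta_B<\pi$ and UMD alone, and keep the commutator hypothesis for the remainder. Transference needs $s\mapsto A^{is}(\nu+B)^{-is}$ to be a group, i.e.\ it needs commutativity. Respecting the order of the factors only gives $A^{is}(\nu+B)^{-is}=A^{-it}\bigl[A^{i(s+t)}(\nu+B)^{-i(s+t)}\bigr](\nu+B)^{it}$. Here the vector $(\nu+B)^{it}x$ depends on the transference variable, so no Hilbert transform of a fixed $L^p(\R;X)$-function appears.

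In fact the claim is false without the commutator condition, even in $L^2(\R^N)$ with $\theta_A=\theta_B=0$. Take $A=\sum_j a_jP_j$ and $\nu+B=\sum_k c_kQ_k$, with $P_j,Q_k$ the rank-one projections onto two different orthonormal bases. Then $S_\nu=\sum_{j,k}(a_j+c_k)^{-1}P_jQ_k$ and $AS_\nu=\sum_{j,k}\frac{a_j}{a_j+c_k}P_jQ_k$. For $a_j=\kappa e^{Lj}$, $c_k=\kappa e^{L(k+1/2)}$ with $\kappa$ large, the coefficient matrix equals triangular truncation up to a Schur multiplier of norm $O(e^{-L/2})$. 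Choosing the change-of-basis unitary suitably gives $\|AS_\nu\|\gtrsim\log n$ on $n$-dimensional blocks. A direct sum then makes $AS_\nu$ unbounded, although $A^{is}$ and $B^{is}$ are unitary. So the commutator hypothesis must already enter the estimate of $AS_\nu$. This is the heart of the Monniaux--Pr\"uss theorem, and your proposal has no mechanism for it.

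The rest of the bookkeeping also does not match what the hypothesis controls. With $(\lambda+A)^{-1}$ to the left of $(\nu-\lambda+B)^{-1}$, nothing shows $S_\nu X\subset D(B)$. So ``hence $BS_\nu$'' is circular, and $(\nu+A+B)S_\nu=I+R_\nu$ is not available: the commutator that would appear is $[B,(\lambda+A)^{-1}]$, about which nothing is assumed.

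What the hypothesis does give is the exact identity $[(\lambda+A)^{-1},(\mu+B)^{-1}]=A(\lambda+A)^{-1}\mathcal{C}(\mu)A(\lambda+A)^{-1}$, where $\mathcal{C}(\mu)=A^{-1}(\mu+B)^{-1}-(\mu+B)^{-1}A^{-1}$. Hence, for $x\in D(A)\cap D(B)$, $S_\nu(\nu+A+B)x=x+K_\nu Ax$ with $K_\nu=\frac{1}{2\pi i}\int_\Gamma A(\lambda+A)^{-1}\mathcal{C}(\nu-\lambda)\,d\lambda$. This remainder is only $A$-bounded, not a small bounded operator. It becomes small in the right sense once you use $\int_\Gamma\mathcal{C}(\nu-\lambda)\,d\lambda=0$ to write $K_\nu=-\frac{1}{2\pi i}\int_\Gamma\lambda(\lambda+A)^{-1}\mathcal{C}(\nu-\lambda)\,d\lambda$. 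That gives $\|AK_\nu\|\le C\nu^{-(\beta-\gamma)}$, and this is exactly where $\gamma<\beta$ is used. Your integrand bound $C/((1+|\lambda|^{1-\gamma})|\nu-\lambda|^{1+\beta})$ integrates to $O(\nu^{-(1+\beta-\gamma)})$ for every $\gamma<1$, so it is not where $\gamma<\beta$ enters.

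Together with a genuine bound for $AS_\nu$, this yields $\|Ax\|+\|Bx\|+\nu\|x\|\le C\|(\nu+A+B)x\|$, hence closedness, injectivity and the $c/\nu$ estimate. Surjectivity still needs a separate argument, which your proposal does not provide. One option is to replace $B$ by its Yosida approximations, which satisfy the hypotheses uniformly, and pass to the limit by weak compactness in $L^p$.
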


\section{Generation results and domain characterization}\label{constantcoeff}
In this section, we want to apply Proposition \ref{cor2MP} to the model operator $A_p+B_p=\Delta^2+V$, defined on  the domain $D(A_p)\cap D(B_p)=D(\Delta^2)\cap D(V_p)$, to prove that it is closed and quasi-sectorial.

Hence, in our situation, since $\theta_{A_p}=\theta_{B_p}=0$, we need an estimate of the commutator in the following form
\begin{eqnarray*}\label{comm}
\|A_p(\lambda+A_p)^{-1}\left[A_p^{-1}(\mu + V_p)^{-1} - (\mu+V_p)^{-1}A_p^{-1}\right]\|_{L^p}\leq\frac{C}{(1+|\lambda|^{1-\gamma})|\mu|^{1+\beta}}
\end{eqnarray*}
for some $C\geq0,$ $0\leq \gamma<\beta\leq 1$ for all $\lambda,\ \mu\in \Sigma_{\pi-\eps}$, $\eps>0$ arbitrarily  chosen.

We start by some preliminary computations needed to estimate the above commutator.

\begin{lemma}\label{a-priori}
Let $u\in W^{4,p}(\R^N)$, $\lambda\in\Sigma_{\pi}$. Then
$$|\lambda|^\frac{3}{4}\|D u\|_p+|\lambda|^\frac{1}{2}\|D^2 u\|_p+|\lambda|^\frac{1}{4}\|D^3 u\|_p+|\lambda|\| u\|_p\leq C\|(\lambda+\Delta^2)u\|_p$$ for some positive constant $C$.
\end{lemma}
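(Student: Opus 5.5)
Set $f=(\lambda+\Delta^2)u\in L^p(\R^N)$. The plan is to read everything off Fourier multipliers combined with a scaling argument, so that the estimate holds uniformly for $\lambda\in\Sigma_{\pi}$ in the sense of any closed subsector $\overline\Sigma_{\pi-\eps}$. The constant $C$ is allowed to depend on $\eps$ (and on $N,p$); this is the only form used later, since the commutator estimate is needed for $\lambda\in\Sigma_{\pi-\eps}$. With this reading, $\lambda+|\xi|^4\neq 0$ and $u=\mathcal F^{-1}\big[(\lambda+|\xi|^4)^{-1}\hat f\big]$.

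\textbf{Step 1: the resolvent bound.} The estimate $|\lambda|\|u\|_p\le C\|f\|_p$ is exactly the resolvent bound of Proposition \ref{properties}(a), applied to $\Delta^2$ rather than $\Delta^2+\omega_1$. We take the shift $\omega_1$ to be any positive number and use $\lambda=(\lambda-\omega_1)+\omega_1$ with a limiting argument. Alternatively, we check directly that $m_\lambda(\xi)=\lambda(\lambda+|\xi|^4)^{-1}$ is a Mikhlin multiplier uniformly in $\lambda\in\Sigma_{\pi-\eps}$, which also gives the bound.

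\textbf{Step 2: reduction to $|\lambda|=1$ by scaling.} Set $\lambda=t^4\omega$ with $|\omega|=1$ and $u(x)=v(tx)$. Then
\[
(\lambda+\Delta^2)u=t^4\big[(\omega+\Delta^2)v\big](t\,\cdot),
\qquad
D^k u=t^k (D^kv)(t\,\cdot).
\]
After the change of variables the factors $t^{-N/p}$ cancel. The claim $|\lambda|^{(4-k)/4}\|D^ku\|_p\le C\|f\|_p$ therefore becomes $\|D^kv\|_p\le C\|(\omega+\Delta^2)v\|_p$ for $k=1,2,3$, with $C$ uniform in $\omega$ on the compact arc $\{|\omega|=1,\ |\arg\omega|\le\pi-\eps\}$.

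\textbf{Step 3: the multiplier estimate at $|\lambda|=1$.} We need the symbols
\[
\xi^\gamma(\omega+|\xi|^4)^{-1},\qquad |\gamma|=k\le 3,
\]
to be $L^p$-multipliers with norms uniform in $\omega$. These symbols are smooth, because $|\omega+|\xi|^4|\ge c_\eps(1+|\xi|^4)$ on the arc. Each derivative $\partial^\beta$ then decays like $|\xi|^{k-4-|\beta|}\le C|\xi|^{-|\beta|}$, and is bounded near the origin. Hence the Mikhlin condition $|\xi|^{|\beta|}|\partial^\beta m|\le C$ holds uniformly, and the Mikhlin multiplier theorem gives the bound.

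The only delicate point is the lower bound $|\omega+s|\ge c_\eps(1+s)$ for $s\ge 0$, which holds uniformly on the sector. Since it fails as $\arg\omega\to\pm\pi$, the constant necessarily depends on $\eps$. Finally, the estimate extends to all $u\in W^{4,p}$ by density of the Schwartz class.
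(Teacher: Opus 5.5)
Your proof is correct, but it follows a different route from the paper's.

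The paper argues in one line. The resolvent bound $|\lambda|\|u\|_p\le C\|f\|_p$ gives $\|\Delta^2u\|_p\le\|f\|_p+|\lambda|\|u\|_p\le C\|f\|_p$. The classical interpolation inequality it cites, used in the form $\|D^ku\|_p\le C\|u\|_p^{1-k/4}\|D^4u\|_p^{k/4}$, then yields $\|D^ku\|_p\le C|\lambda|^{-(4-k)/4}\|f\|_p$. You instead use the homogeneity of $\Delta^2$ to reduce to $|\lambda|=1$, and then verify Mikhlin's condition for $\xi^\gamma(\omega+|\xi|^4)^{-1}$ uniformly on the arc.

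Your version is self-contained and makes the uniformity transparent. It treats small and large $|\lambda|$ alike for the unshifted $\Delta^2$. The paper's sketch tacitly needs sectoriality of $\Delta^2$ itself rather than of $\Delta^2+\omega_1$, together with $\|D^4u\|_p\le C\|\Delta^2u\|_p$; these are exactly the multiplier and scaling facts you prove. Your version also shows that $C$ must depend on the subsector $\overline\Sigma_{\pi-\eps}$. That dependence is unavoidable: $\|(\lambda+\Delta^2)^{-1}\|\ge\sup_\xi\bigl|\lambda+|\xi|^4\bigr|^{-1}\to\infty$ as $\arg\lambda\to\pm\pi$ with $|\lambda|=1$. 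So your reading of ``$\lambda\in\Sigma_\pi$'' is the right one.

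The paper's route, in turn, is structural. It uses only sectoriality and the equivalence of the graph norm with the $W^{4,p}$-norm, so it carries over to the variable-coefficient operators of Section~\ref{noncostcoeff}, where such bounds are needed for $A^*_{p'}$. Your Fourier-multiplier computation is tied to constant coefficients.

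One small caveat concerns the first variant of your Step 1, the limiting argument $\omega_1\to0$ in Proposition~\ref{properties}(a). It is not justified as stated, since that proposition gives no control of its constant in terms of $\omega_1$. It is also not needed: the case $k=0$ of Step 3 after the scaling of Step 2, or your direct Mikhlin alternative, already gives the resolvent bound.
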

\begin{proof}
The claim follows by the classical interpolation inequality (see \cite[Proposition 1.3.8]{lor-rha-book}) and by the resolvent estimate.
\end{proof}

\begin{lemma}\label{multiplication-op}
Let $0\leq V\in W^{3,1}_{\mathrm{loc}}(\R^N)$ and suppose that there exist
$\alpha \in [0,1)$, $c>0$ such that 
 \eqref{condV} holds.
Let $\omega_2>0$ and set $M=(\mu+\omega_2+V)^{-1}$,  then, for every $\mu\in \Sigma_{\pi}$,  there exists $C>0$ such that 
\begin{align*}
\|M\|_\infty \leq \frac{C}{|\mu|+\omega_2}
\end{align*}
and 
\begin{align*}
\|M D_iV\|_\infty,\  
\|M D_{ij}V\|_\infty,\
\|M D_{ijk}V\|_\infty
    \leq C\frac{1}{(|\mu|+\omega_2)^{1-\alpha}}.
\end{align*}
\end {lemma}
\begin{proof}
For any $\mu \in \Sigma_\pi$, we let  $\mu=|\mu|e^{i\varphi}$
where $-\pi<\varphi<\pi$ and $\cos \varphi\geq \nu_0$ for some $-1<\nu_0\leq 0.$
Setting $B=\omega_2+V$, one can estimate
\begin{align*}
|(\mu+B(x))^{-1}|^2
  &=\frac{1}{|\mu|^2\sin^2\varphi+(|\mu|\cos \varphi+B(x))^2}
  =\frac{1}{|\mu|^2+2|\mu|B(x)\cos \varphi +B^2(x)}\\
& \leq
  \frac{1}{(1+\nu_0)(|\mu|^2+B^2(x))}\leq \frac{2}{1+\nu_0}\frac{1}{(|\mu|+B(x))^2}=\frac{C}{(|\mu|+\omega_2+V(x))^2}\\
\end{align*}
and then
\begin{align*}
|(\mu+B(x))^{-1}D_iV(x)|
  &\leq C \frac{V^{\alpha}(x)}{|\mu|+\omega_2+V(x)}\\
&   \leq C\frac{(|\mu|+\omega_2+V(x))^\alpha}{|\mu|+\omega_2+V(x)}
  =C\frac{1}{(|\mu|+\omega_2+V(x))^{1-\alpha}}
   \\& \leq \frac{C}{(|\mu|+\omega_2)^{1-\alpha}}.
\end{align*}
Similarly
\begin{align*}
&|(\mu+B(x))^{-1}D_{ij}V(x)|\leq C \frac{V^{\alpha}(x)}{|\mu|+\omega_2+V(x)}
\leq \frac{C}{(|\mu|+\omega_2)^{1-\alpha}}.
\end{align*}
Finally
\begin{align*}
&|(\mu+B(x))^{-1}D_{ijk}V(x)|\leq C \frac{V^{\alpha}(x)}{|\mu|+\omega_2+V(x)}
\leq \frac{C}{(|\mu|+\omega_2)^{1-\alpha}}.
\end{align*}
\end{proof}

\begin{lemma}\label{multiplication-op2}
Let $0\leq V\in W^{3,1}_{\mathrm{loc}}(\R^N)$ and $M=(\mu+\omega_2+V)^{-1}$ as before.
Then, for every $\mu\in \Sigma_{\pi}$ we have $M\in W^{3,\infty}(\R^N)$ and, moreover,  there exists $C=C(\mu,\omega_2)$ such that
\[\|M\|_\infty\leq \frac{C}{|\mu|}\]
and
\begin{align*}
&\|D_iM\|_\infty,\,
  \|D_{ij}M\|_\infty,\,
  \| D_{ijk}M\|_\infty
    \leq \frac{C}{|\mu|^{2-\alpha}}.
\end{align*}
\end{lemma}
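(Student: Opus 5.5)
Since $M=(\mu+B)^{-1}$ with $B=\omega_2+V$, I will compute the derivatives of $M$ explicitly by the chain rule and then estimate each term pointwise with Lemma \ref{multiplication-op}. Because $V\in W^{3,1}_{\mathrm{loc}}(\R^N)$ and $|\mu+B|\geq c(|\mu|+\omega_2)>0$ (the first estimate in the proof of Lemma \ref{multiplication-op}), the function $M=g\circ V$ is a composition with the smooth map $g(t)=(\mu+\omega_2+t)^{-1}$. All derivatives of $g$ are bounded on $[0,\infty)$. The chain rule in $W^{3,1}_{\mathrm{loc}}$ holds after a standard mollification argument, and it gives
\begin{align*}
D_iM&=-M^2D_iV,\\
D_{ij}M&=2M^3D_iVD_jV-M^2D_{ij}V,\\
D_{ijk}M&=-6M^4D_iVD_jVD_kV+2M^3\bigl(D_{ik}VD_jV+D_iVD_{jk}V+D_{ij}VD_kV\bigr)-M^2D_{ijk}V.
\end{align*}
The bound $\|M\|_\infty\le C/(|\mu|+\omega_2)\le C/|\mu|$ comes directly from Lemma \ref{multiplication-op}.

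For the derivatives, I will write each term as a product of the factors $MD_\cdot V$, $MD_{\cdot\cdot}V$ and $MD_{\cdot\cdot\cdot}V$, plus one leftover factor of $M$. For example, $M^2D_iV=M\cdot(MD_iV)$ and $M^4D_iVD_jVD_kV=M\cdot(MD_iV)(MD_jV)(MD_kV)$. Lemma \ref{multiplication-op} bounds the leftover $M$ by $C(|\mu|+\omega_2)^{-1}$ and each factor of the form $M\,D V$ by $C(|\mu|+\omega_2)^{-(1-\alpha)}$. A term with $k$ derivative factors is therefore bounded by $C(|\mu|+\omega_2)^{-1-k(1-\alpha)}$. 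Since $k\ge1$, this is at most $C(|\mu|+\omega_2)^{-(2-\alpha)}\,\omega_2^{-(k-1)(1-\alpha)}$. So $C$ depends on $\omega_2$ (and on $\mu$ only through the sector, which is consistent with the statement), and we get $\le C|\mu|^{-(2-\alpha)}$. In particular $M\in W^{3,\infty}(\R^N)$, because all the derivatives up to order three are bounded.

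The one delicate point is the justification of the chain rule and the product formulas for $V$ that is only in $W^{3,1}_{\mathrm{loc}}$. The products of first derivatives, such as $D_iVD_jVD_kV$, need not be locally integrable a priori. They are, however, bounded by $cV^{3\alpha}$, which is locally bounded once we know that $V$ is locally bounded, and that follows from $|DV|\le cV^\alpha$ with $\alpha<1$. Here is how I would handle it. First use $|D(V^{1-\alpha})|\le c$ to get that $V^{1-\alpha}$ is Lipschitz, so $V$ is locally bounded and continuous. Then $D_iV,D_{ij}V\in L^\infty_{\mathrm{loc}}$, and the formulas follow by mollifying $V$, applying the classical chain rule, and passing to the limit in $L^1_{\mathrm{loc}}$.
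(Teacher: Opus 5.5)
Your proof is correct and follows the same route as the paper: you expand the derivatives of $M=(\mu+\omega_2+V)^{-1}$ by the chain rule, write each term as one factor $M$ times factors of the form $M\,D^hV$, and bound everything with Lemma \ref{multiplication-op}. Your write-up is more careful than the paper's in two respects: the paper neither justifies the chain rule nor proves $M\in W^{3,\infty}(\R^N)$, which your Lipschitz argument for $V^{1-\alpha}$ supplies (best run on $(V+\varepsilon)^{1-\alpha}$ to avoid the singularity at $V=0$), and the paper's expansion of $D_{ijk}(\mu+B)^{-1}$ omits the term $2M^3D_{ij}V\,D_kV$ that your formula correctly includes, though this does not affect the final bound.
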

\begin{proof}
Setting $B=\omega_2+V$, one can compute
\begin{align}\label{eq:der1M}
 D_i(\mu+B)^{-1}&=-(\mu+B)^{-2}D_iV\\
&=-(\mu+B)^{-1}(\mu+B)^{-1}D_iV\nonumber
\end{align}
and then
$$\| D_i(\mu+B)^{-1}\|_\infty
\leq C\frac{1}{|\mu|+\omega_2}\frac{1}{(|\mu|+\omega_2)^{1-\alpha}}\leq \frac{C}{|\mu|^{2-\alpha}}.$$
For the second term  
\begin{align}\label{eq:der2M}
 D_{ij}(\mu+B)^{-1}&=2(\mu+B)^{-3}D_iVD_jV-(\mu+B)^{-2}D_{ij}V\\
&=(\mu+B)^{-1}\left[2(\mu+B)^{-1}D_iV(\mu+B)^{-1}D_jV-(\mu+B)^{-1}D_{ij}V\right]
\nonumber
\end{align}
and then
$$
\| D_{ij}(\mu+B)^{-1}\|_\infty
  \leq \frac{C}{|\mu|+\omega_2}\left(\frac{1}{(|\mu|+\omega_2)^{2(1-\alpha)}}+\frac{1}{(|\mu|+\omega_2)^{(1-\alpha)}}\right)
   \leq \frac{C}{|\mu|^{2-\alpha}}. $$
Finally,
\begin{align}\label{eq:der3M}
& D_{ijk}(\mu+B)^{-1}\nonumber\\
&=-6(\mu+B)^{-4}D_iVD_jVD_kV
    +2(\mu+B)^{-3}D_{ik}VD_jV+2(\mu+B)^{-3}D_{i}VD_{jk}V
    -(\mu+B)^{-2}D_{ijk}V\nonumber\\
& =(\mu+B)^{-1}\left[(\mu+B)^{-1}D_iV(\mu+B)^{-1}D_jV(\mu+B)^{-1}D_kV\right.\nonumber\\
&\quad +2(\mu+B)^{-1}D_{ik}V(\mu+B)^{-1}D_jV+2(\mu+B)^{-1}D_{i}V(\mu+B)^{-1}D_{jk}V
    \nonumber\\
&\quad \left. -(\mu+B)^{-1}D_{ijk}V\right]\nonumber\\
\end{align}
and then
\begin{align*}
\| D_{ijk}(\mu+B)^{-1}\|_\infty
  &\leq \frac{C}{|\mu|+\omega_2}\left(
    \frac{1}{(|\mu|+\omega_2)^{3(1-\alpha)}}
    +\frac{1}{(|\mu|+\omega_2)^{2(1-\alpha)}}
    +\frac{1}{(|\mu|+\omega_2)^{(1-\alpha)}}\right)\\&
   \leq \frac{C}{|\mu|^{2-\alpha}}.
\end{align*}
\end{proof}

\begin{lemma}\label{lm:commutator} Let $M\in W^{3,\infty}(\R^N)$.
Set $$C(\lambda,\mu)=A_p(\lambda+A_p)^{-1}\left[A_p^{-1}(\mu + B_p)^{-1} - (\mu+B_p)^{-1}A_p^{-1}\right].$$
Then for every $f\in L^p(\R^N)$, setting $g=A_p^{-1}f$,  the following equality holds
\begin{align*}
 -C(\lambda, \mu)f
 &={\rm div}(\lambda+ A_p)^{-1}\nabla (\Delta M g)\\
 &\quad+ (\lambda + A_p)^{-1} \bigg[2(\nabla\Delta M)\cdot (\nabla g)\\
 &\quad+(\Delta M)(\Delta g)+4Tr(D^2MD^2g)\\
 &\quad +4(\nabla M)\cdot (\nabla\Delta g) \bigg].
 \end{align*}    
\end{lemma}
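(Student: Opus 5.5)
The plan is to strip away the resolvent factors algebraically, so that the identity reduces to a Leibniz-rule computation for the commutator $[\Delta^2,M]$. Throughout, $(\mu+B_p)^{-1}$ is multiplication by $M=(\mu+\omega_2+V)^{-1}$; for the lemma I only use that $M\in W^{3,\infty}(\R^N)$.

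\textbf{Step 1 (algebraic reduction).} Fix $f\in L^p(\R^N)$ and set $g=A_p^{-1}f\in W^{4,p}(\R^N)$, so that $f=A_pg$. Since $A_p$ and $(\lambda+A_p)^{-1}$ commute, $A_p(\lambda+A_p)^{-1}A_p^{-1}=(\lambda+A_p)^{-1}$. Hence
\[
C(\lambda,\mu)f=(\lambda+A_p)^{-1}\big(M A_pg\big)-(\lambda+A_p)^{-1}A_p(Mg).
\]
Formally this gives
\[
-C(\lambda,\mu)f=(\lambda+A_p)^{-1}\big(\Delta^2(Mg)-M\Delta^2 g\big),
\]
because the $\omega_1$ contributions $\omega_1 Mg$ cancel.

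The one delicate point is that $Mg$ need not lie in $W^{4,p}$, since $M$ has only three bounded derivatives. So $A_p(Mg)$ must be read in the distributional sense. I would handle this in one of two ways:
\begin{itemize}
\item regularize $M$ by mollification, $M_\eps=M*\rho_\eps$, for which $M_\eps g\in W^{4,p}$ and everything is classical, and then pass to the limit in the final formula (where only three derivatives of $M$ appear);
\item or extend $(\lambda+A_p)^{-1}$ to $W^{-1,p}$ by duality.
\end{itemize}
I expect this justification to be the main (mild) obstacle.

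\textbf{Step 2 (Leibniz expansion).} First,
\[
\Delta(Mg)=(\Delta M)g+2\nabla M\cdot\nabla g+M\Delta g.
\]
Applying $\Delta$ once more, I treat the three terms separately.
\begin{itemize}
\item The term $\Delta(\Delta M\, g)$ is kept intact and written as ${\rm div}\,\nabla(\Delta M\, g)$. It cannot be expanded further, because that would require fourth derivatives of $M$.
\item For the middle term,
\[
2\Delta(\nabla M\cdot\nabla g)=2\nabla\Delta M\cdot\nabla g+4\,Tr(D^2MD^2g)+2\nabla M\cdot\nabla\Delta g.
\]
\item For the last term,
\[
\Delta(M\Delta g)-M\Delta^2g=\Delta M\,\Delta g+2\nabla M\cdot\nabla\Delta g.
\]
\end{itemize}
Summing, the two $\nabla M\cdot\nabla\Delta g$ contributions combine to $4\nabla M\cdot\nabla\Delta g$, which yields exactly the bracket in the statement plus the divergence term.

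All the non-divergence terms lie in $L^p$. This follows from $g\in W^{4,p}$ and $M\in W^{3,\infty}$. Moreover $\nabla(\Delta M\,g)\in L^p$, so the divergence term lies in $W^{-1,p}$.

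\textbf{Step 3 (moving the resolvent inside the divergence).} Finally I write
\[
(\lambda+A_p)^{-1}\,{\rm div}\,\nabla(\Delta M g)={\rm div}\,(\lambda+A_p)^{-1}\nabla(\Delta M g).
\]
This holds because $A_p=\Delta^2+\omega_1$ has constant coefficients, so its resolvent is a Fourier multiplier commuting with $D_i$. The commutation is first verified on $C_c^\infty$ and then extended by density to $h=\nabla(\Delta M g)\in L^p$. The resulting expression makes sense without any extension of the resolvent, because $(\lambda+A_p)^{-1}h\in W^{4,p}$.

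If the mollification route is used in Step 1, each term in the resulting formula converges as $\eps\to0$:
\[
D^kM_\eps\to D^kM \quad\text{a.e., boundedly, for } k\le 3,
\]
so by dominated convergence each product with derivatives of $g$ converges in $L^p$. Together with the continuity of $(\lambda+A_p)^{-1}$ and $(\lambda+A_p)^{-1}D_i$ on $L^p$, this passes the identity to the limit and concludes the proof.
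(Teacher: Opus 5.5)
Your proposal is correct and follows essentially the paper's route: reduce to $(\lambda+A_p)^{-1}$ applied to $\Delta^2(Mg)-M\Delta^2 g$, expand by Leibniz while keeping $\Delta(\Delta M\,g)$ in divergence form, and use that the constant-coefficient resolvent commutes with derivatives. The only difference is in handling $Mg\notin W^{4,p}$. You mollify or extend the resolvent to $W^{-1,p}$; the paper writes $\Delta^2(\lambda+A_p)^{-1}(Mg)=\Delta(\lambda+A_p)^{-1}\Delta(Mg)$ (valid since $\Delta(Mg)\in W^{1,p}$) and moves the outer $\Delta$ inside only on the pieces $2\nabla M\cdot\nabla g+M\Delta g\in W^{2,p}$, so it needs no limiting argument (a mollification argument like yours is what the paper uses later in the variable-coefficient case).
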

\begin{proof}
    Let $f\in L^p(\R^N)$ and set  $g=A_p^{-1}f\in W^{4,p}(\R^N)$. By Lemma \ref{multiplication-op2}, $Mg\in W^{3,p}(\R^N)$. We observe that $(\lambda+A_p)^{-1}=R(\lambda+\omega_1,-\Delta^2)$ commutes with the derivatives up to order four. Then,
    \begin{align*}
    &A_p(\lambda+A_p)^{-1}\left[ A_p^{-1}M_p-M_pA_p^{-1}\right]f
        =A_p(\lambda+A_p)^{-1} A_p^{-1}(Mf)-A_p(\lambda+A_p)^{-1}(MA_p^{-1}f)\\
    &=(\lambda+A_p)^{-1}(Mf)-(\omega_1+\Delta^2)(\lambda+A_p)^{-1}(Mg)\\
    &=(\lambda+A_p)^{-1}(MA_pg)-\omega_1(\lambda+A_p)^{-1}(Mg)-\Delta (\lambda+A_p)^{-1}\Delta(Mg)\\
        &=(\lambda+A_p)^{-1}(MA_pg)-\omega_1(\lambda+A_p)^{-1}(Mg)-\Delta (\lambda+A_p)^{-1}\left(\Delta Mg+2\nabla M\cdot \nabla g+M\Delta g\right)\\
        &=(\lambda+A_p)^{-1}(MA_pg)-\omega_1(\lambda+A_p)^{-1}(Mg)-\Delta (\lambda+A_p)^{-1}\left(\Delta Mg\right)\\
        &\quad
                -(\lambda+A_p)^{-1}\left(2\Delta(\nabla M\cdot \nabla g)+
                \Delta M\Delta g+2\nabla M\cdot \nabla \Delta g+M\Delta^2g\right)\\
            &=(\lambda+A_p)^{-1}(MA_pg)-(\lambda+A_p)^{-1}M(\omega_1g+\Delta^2g)-\Delta (\lambda+A_p)^{-1}\left(\Delta Mg\right)\\
        &\quad
                -(\lambda+A_p)^{-1}\left(2\Delta(\nabla M\cdot \nabla g)+
                \Delta M\Delta g+2\nabla M\cdot \nabla \Delta g\right)\\
                 &=-{\rm div}\nabla (\lambda+A_p)^{-1}\left(\Delta Mg\right)\\
        &\quad
                -(\lambda+A_p)^{-1}\left(2\Delta(\nabla M\cdot \nabla g)+
                \Delta M\Delta g+2\nabla M\cdot \nabla \Delta g\right)\\
        &=-{\rm div} (\lambda+A_p)^{-1}\nabla (\Delta Mg)\\
        &\quad -(\lambda+A_p)^{-1}\left(2\Delta(\nabla M\cdot \nabla g)+
                \Delta M\Delta g+2\nabla M\cdot \nabla \Delta g\right).
    \end{align*}
\end{proof}

\begin{theorem}\label{dom-gen}
Fix $p\in (1,\infty)$. Assume that
$0\leq V\in W^{3,1}_{\mathrm{loc}}(\R^N)$ and $V$ satisfies the assumption \eqref{condV} with $\alpha\in [0,\frac{3}{4})$. 
Then, the operator $A_p+B_p$, defined on the domain
$D(A_p)\cap D(B_p)$, is closed, densely defined and quasi-sectorial of angle $0$. Therefore $-A_p-B_p$  generates an analytic semigroup.
\end{theorem}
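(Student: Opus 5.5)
The plan is to apply Proposition \ref{cor2MP} with $A=A_p=\Delta^2+\omega_1$ and $B=B_p=V_p+\omega_2$. By Proposition \ref{properties}, both operators are sectorial, invertible and admit bounded imaginary powers with $\theta_{A_p}=\theta_{B_p}=0$. Hence, for any $\varepsilon>0$, we may take $\vartheta_A=\vartheta_B=\varepsilon$, and $\vartheta_A+\vartheta_B<\pi$ holds. Density of $D(A_p)\cap D(B_p)$ is clear, since it contains $C_c^\infty(\R^N)$ (indeed $V\in L^\infty_{\rm loc}$ by Sobolev embedding of $W^{3,1}_{\rm loc}$, or more simply by \eqref{condV}, which gives local boundedness). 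What remains is the commutator estimate with some $0\le\gamma<\beta\le 1$.

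Write $M=(\mu+\omega_2+V)^{-1}$. By Lemma \ref{multiplication-op2}, $M\in W^{3,\infty}$ with $\|D^kM\|_\infty\le C|\mu|^{-(2-\alpha)}$ for $k=1,2,3$. Lemma \ref{lm:commutator} then represents $-C(\lambda,\mu)f$ as a sum of five terms, with $g=A_p^{-1}f$ and $\|D^jg\|_p\le C\|f\|_p$ for $j\le 4$.

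The four non-divergence terms are of the form $(\lambda+A_p)^{-1}[\,D^kM\cdot D^{j}g\,]$ with $k\ge1$. Each is bounded by
\[
\frac{C}{1+|\lambda|}\,|\mu|^{-(2-\alpha)}\|f\|_p .
\]
For the divergence term, commute $\nabla$ through the resolvent and use Lemma \ref{a-priori} in the form $\|D(\lambda+A_p)^{-1}h\|_p\le C|\lambda|^{-3/4}\|h\|_p$. This gives a bound $C|\lambda|^{-3/4}\,\|\nabla(\Delta M\,g)\|_p$. The difficulty is that $\nabla\Delta M$ requires three derivatives of $M$, which we have, and $\Delta M\,\nabla g$ is fine, so this term is bounded by $C|\lambda|^{-3/4}|\mu|^{-(2-\alpha)}\|f\|_p$. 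For small $|\lambda|$, use instead that $(\lambda+A_p)^{-1}$ maps $L^p$ to $W^{4,p}$ uniformly on the sector, since $A_p$ is invertible. Together this yields
\[
\|C(\lambda,\mu)\|\le \frac{C}{(1+|\lambda|^{3/4})\,|\mu|^{2-\alpha}},
\]
and here the factor $|\mu|^{-(2-\alpha)}$ should be read with $|\mu|+\omega_2$, which handles small $|\mu|$. So $\gamma=1/4$ and $1+\beta=2-\alpha$, that is $\beta=1-\alpha$. The condition $\gamma<\beta$ is exactly $\alpha<3/4$, which is precisely where the hypothesis enters. For $|\mu|\ge1$ the requirement $|\mu|^{-(2-\alpha)}$ is as required; for small $|\mu|$ we have $(|\mu|+\omega_2)^{-(2-\alpha)}\le C\le C|\mu|^{-(2-\alpha)}$ for $|\mu|\le1$, so the bound holds on the whole sector.

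Proposition \ref{cor2MP} then gives that $A_p+B_p$ is closed on $D(A_p)\cap D(B_p)$, that $\nu+A_p+B_p$ is invertible for $\nu>\nu_0$, and that $\|(\nu+A_p+B_p)^{-1}\|\le c/\nu$. To obtain angle $0$ rather than mere sectoriality on the positive axis, I would invoke \cite[Theorem 1]{mon-pru}, which yields sectoriality of $\nu_0+A+B$ with angle at most $\max(\vartheta_A,\vartheta_B)$. Since $\varepsilon$ is arbitrary, the spectral angle is $0<\pi/2$, and generation of a bounded analytic semigroup by $-(\nu_0+A_p+B_p)$, hence an analytic semigroup by $-A_p-B_p$, follows from \cite[Theorem II.4.6]{eng-nag}.

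The main obstacle is the divergence term, together with checking that the $\lambda$-decay $|\lambda|^{-3/4}$ is the worst one; this is what forces $\gamma=1/4$ and hence $\alpha<3/4$. A secondary obstacle is extracting the angle statement, since Corollary \ref{cor2MP} alone only controls the real half-line.
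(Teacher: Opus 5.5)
Your proposal is correct and follows the paper's proof essentially step for step: the representation of $-C(\lambda,\mu)f$ from Lemma \ref{lm:commutator}, the bound $C|\lambda|^{-3/4}|\mu|^{-(2-\alpha)}$ on the divergence term and $C(1+|\lambda|)^{-1}|\mu|^{-(2-\alpha)}$ on the other four terms via Lemmas \ref{a-priori} and \ref{multiplication-op2}, and then Proposition \ref{cor2MP} with $\gamma=1/4$, $\beta=1-\alpha$, so that $\gamma<\beta$ is exactly $\alpha<3/4$. Your additions (density, small $|\lambda|$ and $|\mu|$, and \cite[Theorem 1]{mon-pru} for the angle, which the paper invokes only in its introduction) are sound, with one exception: $W^{3,1}_{\rm loc}\subset L^\infty_{\rm loc}$ holds only for $N\le 3$, so local boundedness of $V$ should instead come from \eqref{condV}, which makes $V^{1-\alpha}$ locally Lipschitz.
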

\begin{proof}
Let $\lambda, \mu\in \Sigma_\pi$ and $f\in L^p(\R^N)$. Set $M(x)=\frac{1}{\mu+\omega_2+V(x)}$  and consider the associated multiplication operators $M_p$. Observe that $(\mu+B_p)^{-1}=M_p$. By Lemma \ref{multiplication-op2} we have $M\in W^{3,\infty}(\R^N)$.  Consider the commutator 
\begin{eqnarray*}
C(\lambda, \mu )f = A_p(\lambda+A_p)^{-1}\left[A_p^{-1}(\mu + B_p)^{-1} - (\mu+B_p)^{-1}A_p^{-1}\right]f.
\end{eqnarray*}
Set $g=A_p^{-1}f$. By Lemma \ref{lm:commutator},  we have
\begin{align*}
-C(\lambda, \mu)f
&={\rm div}(\lambda+ A_p)^{-1}\nabla (\Delta M g) + (\lambda + A_p)^{-1} \bigg[2(\nabla\Delta M)\cdot (\nabla g)\\
 &\quad +(\Delta M)(\Delta g)+4Tr(D^2MD^2g)
 +4(\nabla M)\cdot (\nabla\Delta g) \bigg].
 \end{align*}
By Lemma \ref{a-priori},
 for  
$\lambda\in \Sigma_\pi-\omega_1$,
the operator  $D_i(A_p+\lambda)^{-1}$ is  bounded in $L^p$ for every $i=1,\dots,N$ and, taking into account that for $\lambda\in  \Sigma_\pi$ we have $|\lambda+\omega_1|\geq C(|\lambda|+\omega_1)$,
\[
\|D_i(A_p+\lambda)^{-1}\|_p\leq \frac{C}{|\lambda+\omega_1| ^{3/4}}\leq \frac{C}{1+|\lambda| ^{3/4}}.
\]
Therefore,
\begin{align*}
\|{\rm div}(\lambda+ A_p)^{-1}\nabla (\Delta M g)\|_p&\leq
\sum_{i=1}^N \|D_i(A_p+\lambda)^{-1}(D_i (\Delta M g))\|_p\\
&  \leq \sum_{i=1}^N \frac{C}{1+|\lambda| ^{3/4}}\|D_i (\Delta M g)\|_p\\&\leq \frac{C}{1+|\lambda| ^{3/4}}(\|D^3M\|_\infty\|g\|_p+\|D^2M\|_\infty\|\nabla g\|_p)\\
&  \leq \frac{C}{1+|\lambda| ^{3/4}} \frac{1}{|\mu|^{2-\alpha}}\|g\|_{W^{1,p}(\R^N)}.
\end{align*}
By Lemma \ref{a-priori} again (applied with $\lambda=\omega_1$), we have $\|D_i(A_p^{-1} f)\|_p=\|D_ig\|_p\leq C\|f\|_p$ 
and then
\[
\|{\rm div}(\lambda+ A_p)^{-1}\nabla (\Delta M g)\|_p\leq \frac{C}{(1+|\lambda| ^{3/4})\, |\mu|^{2-\alpha}}\|f\|_p.
\]
Concerning the term $(\lambda + A_p)^{-1} (\nabla\Delta M)\cdot (\nabla g)$, as before 
\begin{align*}
&\|(\lambda + A_p)^{-1} (\nabla\Delta M)\cdot (\nabla g)\|_p
  \leq 
 C\frac{1}{|\lambda+\omega_1| }\frac{1}{\mu^{2-\alpha}}\|f\|_p\leq C\frac{1}{1+|\lambda| ^{3/4}}\frac{1}{\mu^{2-\alpha}}\|f\|_p.
\end{align*}  
We argue similarly for the remaining terms, for each of them we get $\|(\lambda + A_p)^{-1}\|_p\leq \frac{C}{|\lambda+\omega_1| }$ by  Lemma \ref{a-priori} for  $\lambda\in \Sigma_\pi-\omega_1$,  $\|D^2M\|_\infty,\ \|\nabla M\|_\infty\leq \frac{C}{\mu^{2-\alpha}}$ by Lemma \ref{multiplication-op2} and 
$$\|D_{ij}g\|_p\leq C\|f\|_p,\|D_{ijk}g\|_p\leq C\|f\|_p$$
by Lemma \ref{a-priori} again (applied with $\lambda=\omega_1$).
We finally obtain for $\lambda \in \Sigma_\pi$
\[
\|C(\lambda, \mu)f\|_p
\leq C\frac{1}{1+|\lambda| ^{3/4}}\frac{1}{\mu^{2-\alpha}}\|f\|_p.
\]
 The assumptions of  Proposition \ref{cor2MP}  are satisfied with $0\leq \frac{1}{4}<1-\alpha\leq 1$ and the claim follows.

\end{proof}

We now provide an example of a potential $V$ that satisfies our standing assumptions.

\begin{ex}
Let $-\infty< r<4$, $V(x)=(1+|x|^2)^\frac{r}{2}$. Then $V$ satisfies the assumptions by choosing  $\alpha>\max\{0, \frac{r-1}{r}\}$ if $r>0$ and any $0<\alpha<1$ if $r<0$.  By Theorem \ref{dom-gen}, it follows that the operator $\Delta^2+V$, defined on the domain
$D(\Delta^2)\cap D(V)$, is closed, densely defined and quasi-sectorial of angle $0$. Therefore $-\Delta^2-V$  generates an analytic semigroup.
\end{ex}

\section{More general operators}\label{noncostcoeff}

In this section we extend the above results to more general fourth order operators with suitable coefficients. More precisely, we consider 
\begin{equation*}
    L=\sum_{|\alpha|\leq 2,
          |\beta|\leq 2 }
    D^{\alpha}a_{\alpha, \beta}D^\beta
\end{equation*}
with
\[
\begin{array}{ll}
     a_{\alpha,\beta}\in BUC(\R^N,\R)\cap W^{2,\infty}(\R^N;\R) & \text{ if } |\alpha|=|\beta|=2  \\
     a_{\alpha,\beta}\in L^{\infty} (\R^N;\R) & \text{ if } |\alpha|,|\beta|<2
\end{array}
\]
satisfying the following ellipticity condition: there exists $\nu>0$ such that
\[\left| \sum_{|\alpha|=|\beta|=2}
   a_{\alpha,\beta}(x)\xi^{\alpha+\beta}+r^4e^{i\theta}
   \right|\geq \nu(|\xi|^4+r^4)
\]
for each $x\in \R^N$, $\xi\in \R^N$ and $r\geq 0$. 

We observe that the formal adjoint
is given by 
\begin{equation*}
    L^*=\sum_{
          |\alpha|\leq 2,|\beta|\leq 2 } (-1)^{|\alpha|+|\beta|}\,
    D^{\beta}a_{\alpha, \beta}D^\alpha
\end{equation*}
and satisfies the same ellipticity condition of $L.$

Recall that, as in the previous section  we choose $\omega_1>0$ and $\omega_2>0$ in order that the operators  $L_p+\omega_1 I$ and  $V_p+\omega_2I$ are invertible and
set $A_p=L_p+\omega_1 I$ with $D_p(A) = W^{4,p}(\R^N)$ and  $B_p=V_p+\omega_2I$.

We consider the formal adjoint
$A^*=(L+\omega_1I)^*$ and observe that the coefficients of $-A^{*}$ satisfy the condition of \cite[Theorem 3.2.2]{lun}, then we have that, after a suitable choice of $\omega_1$, the realization in $L^p(\R^N)$ of $A^*$ is sectorial, with dense domain and range and it coincides with $L_p^*+\omega_1I$. Moreover, the sectorial angles of the operators coincide ($\varphi_{A_p}=\varphi_{A^*_p}$) (\cite[Proposition 1.3 (v)]{denk-hieber-purss}).

\begin{lemma}\label{a-priori-gen}
Let $\lambda\in\Sigma_{\pi-\varphi_{A_p}}$.
Then for every $1\leq i\leq N$ the operator 
$(\lambda+A_p)^{-1}D_i$ is a bounded operator from $W^{1,p}(\R^N)\to L^p(\R^N)$ and
\[
\|(\lambda+A_p)^{-1}D_i\|\leq \frac{C}{|\lambda|^{3/4}}
\] for some positive constant $C$.
\end{lemma}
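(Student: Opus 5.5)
The plan is a duality argument. It transfers the gradient estimate from the adjoint operator $A_p^*$, which is sectorial on $L^{p'}(\R^N)$ with the same angle, to the operator $(\lambda+A_p)^{-1}D_i$.

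\textbf{Step 1: gradient estimate for the adjoint resolvent.} We first prove the analogue of Lemma \ref{a-priori} for $A^*$ on $L^{p'}(\R^N)$, with $p'$ the conjugate exponent. For $\lambda\in\Sigma_{\pi-\varphi_{A_p}}$, sectoriality of $A^*_{p'}$ (same angle, by \cite[Proposition 1.3 (v)]{denk-hieber-purss}) gives
\[
\|(\lambda+A^*_{p'})^{-1}\|\leq C|\lambda|^{-1}.
\]
The elliptic a priori estimate, that is the $W^{4,p'}$ domain characterization from \cite[Theorem 3.2.2]{lun}, gives
\[
\|(\lambda+A^*_{p'})^{-1}h\|_{W^{4,p'}}\leq C\|(A^*_{p'}+\omega)(\lambda+A^*_{p'})^{-1}h\|_{p'}\leq C\|h\|_{p'},
\]
uniformly for $|\lambda|\geq 1$ in the sector. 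We then use the interpolation inequality
\[
\|Dv\|_{p'}\leq C\|v\|_{p'}^{3/4}\|v\|_{W^{4,p'}}^{1/4}
\]
(\cite[Proposition 1.3.8]{lor-rha-book}) with $v=(\lambda+A^*_{p'})^{-1}h$. This yields
\[
\|D_i(\lambda+A^*_{p'})^{-1}h\|_{p'}\leq C|\lambda|^{-3/4}\|h\|_{p'}.
\]
For small $|\lambda|$ the bound is handled by invertibility of $\lambda+A_p$, after possibly enlarging $\omega_1$ so that the whole closed sector lies in the resolvent set.

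\textbf{Step 2: duality.} Take $u\in C_c^\infty(\R^N)$, which is dense in $W^{1,p}$, and $h\in L^{p'}$. We have $\big((\lambda+A_p)^{-1}\big)^*=(\bar\lambda+A^*_{p'})^{-1}$, because $A^*_{p'}$ is the Banach adjoint of $A_p$, as stated before the lemma. Integrating by parts,
\[
\langle (\lambda+A_p)^{-1}D_iu,h\rangle=\langle D_iu,(\bar\lambda+A^*_{p'})^{-1}h\rangle=-\langle u,D_i(\bar\lambda+A^*_{p'})^{-1}h\rangle .
\]
The integration by parts is legitimate since $(\bar\lambda+A^*_{p'})^{-1}h\in W^{4,p'}$. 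Therefore
\[
|\langle (\lambda+A_p)^{-1}D_iu,h\rangle|\leq C|\lambda|^{-3/4}\|u\|_p\|h\|_{p'}.
\]
Taking the supremum over $h$ gives
\[
\|(\lambda+A_p)^{-1}D_iu\|_p\leq C|\lambda|^{-3/4}\|u\|_p\leq C|\lambda|^{-3/4}\|u\|_{W^{1,p}}.
\]
This proves the claim, in fact with an $L^p$ bound, and extends by density to all of $W^{1,p}$.

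\textbf{Main obstacle.} The delicate point is Step 1. It needs the resolvent of $A^*$ to map $L^{p'}$ into $W^{4,p'}$ with bounds uniform in $\lambda$ on the sector, so that $\|v\|_{W^{4,p'}}\lesssim\|h\|_{p'}$. This is obtained by writing
\[
(A^*+\omega)v=h-(\lambda-\omega)v
\]
and using the resolvent bound to get $\|(\lambda-\omega)v\|_{p'}\leq C\|h\|_{p'}$. It also needs the identification of the adjoint realization, which is why the regularity $a_{\alpha,\beta}\in W^{2,\infty}$ for the top-order coefficients is required: it puts $L^*$ in nondivergence form with bounded coefficients.
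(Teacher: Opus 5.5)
Your proposal is correct and follows the same route as the paper. You pair $(\lambda+A_p)^{-1}D_iu$ with $h\in L^{p'}$, pass to the adjoint resolvent $v=(\bar\lambda+A^*_{p'})^{-1}h\in W^{4,p'}$, and integrate by parts to move $D_i$ onto $v$. You then conclude from $\|D_i(\bar\lambda+A^*_{p'})^{-1}\|\le C|\lambda|^{-3/4}$, which gives a bound in terms of $\|u\|_p$ exactly as in the paper. The one difference is that the paper only asserts this gradient estimate for the adjoint resolvent, whereas you derive it from the $W^{4,p'}$ graph-norm estimate, the resolvent bound and the interpolation inequality, which fills in a step the paper leaves implicit.
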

\begin{proof}
Take $\lambda\in \Sigma_{\pi-\varphi_{A_p}}$. Consider the adjoint operator $A_p^*$,  it satisfies the a-priori estimate
\[
\|D_i(\lambda+A^*_{p'})^{-1}g\|_{p'}
\leq \frac{C}{|\lambda|^{3/4}}
\|g\|_{p'}.
\]
Let $f\in W^{1,p}(\R^N)$, $g\in L^{p'}(\R^N)$ 
and set $v=(\lambda+A_{p'}^*)^{-1}g\in W^{4,p'}(\R^N)$, so $v\in D(A_{p'}^{*})$ and $\lambda v+A^*_{p'}v=g$.
Then,
\begin{align*}
\int (\lambda+A_p)^{-1}D_ifg   & =\int (\lambda+A_p)^{-1}D_if(\lambda +A^*_{p'})v
    =\int (\lambda+A_p)(\lambda+A_p)^{-1}D_ifv\\
&=\int D_if v
=\int -fD_iv.
\end{align*}
Therefore,
\begin{align*}
   \left |\int (\lambda+A_p)^{-1}D_ifg  \right|\leq \|f\|_p\|D_iv\|
    \leq \frac{C}{|\lambda|^{3/4}} \|f\|_p\|g\|_{p'}.
\end{align*}
    
\end{proof}


\begin{lemma}\label{approx-multiplication-op}
Let $0\leq V\in W^{3,1}_{\mathrm{loc}}(\R^N)$  and suppose that there exists
$\alpha \in [0,1)$, $c>0$ such that 
 \eqref{condV} holds.
Let $\mu\in \Sigma_{\pi}$ and set $M=\frac{1}{\mu+\omega_2+V}$, then there exist  a constant $C=C(\mu,\omega_2)$ and a sequence
$M_n\in C^\infty(\R^N)$ such that 
$D^hM_n\to D^hM$ a.e. in $\R^N$ 
for every multi-index $h\in \N_0^N$ such that $ |h|\leq 3$. Moreover,
\begin{align*}
\|M_n\|_\infty \leq \frac{C}{|\mu|}
\end{align*}
and 
\begin{align*}
\|D_iM_n\|_\infty,\  \|D_{ij}M_n\|_\infty,\ \|D_{ijk}M_n\|_\infty
    \leq C\frac{1}{|\mu|^{2-\alpha}}.
\end{align*}
\end {lemma}
\begin{proof}
Let $(\rho_n)_{n\in \N}$ be a mollifier sequence and set $V_n=(\rho_n\star V)\in C^\infty(\R^N)$ and $M_n=\frac{1}{\mu+\omega_2+V_n}$. We have that  $V_n\to V$ in $W_{\rm loc}^{j,p}(\R^N)$ for every $1\leq j\leq 3$. By Lemma   \ref{multiplication-op},
$|M_n|\leq \frac{C}{\omega_2}$ and 
$|M|\leq \frac{C}{\omega_2}$ .
Moreover, $V_n\to V$ uniformly on compact sets and  since
\begin{align*}
M(x)-M_n(x)=M(x)M_n(x)(V_n(x)-V(x))
\end{align*}
$M_n\to M$ uniformly on compact set too.
From equations \eqref{eq:der1M},\eqref{eq:der2M} and \eqref{eq:der3M}
applied to $M_n$ we have that $D^hM_n\to D^hM$
pointwise since $M_n\to M$  and  
$D^hV_n\to D^hV$ pointwise.
Finally observe that
\begin{align*}
    |D^hV_n(x)|&=\left|D^h\int \rho_n(y)V(x-y)dy\right|=\left|\int \rho_n(y)D^hV(x-y)dy\right|\\
    &  \leq \int \rho_n(y)|D^hV(x-y)|dy\
        \leq  C\int \rho_n(y)V^\alpha(x-y)dy\\
    &  \leq C\left(\int \rho_n(y)V(x-y)dy\right)^\alpha 
        =CV_n^\alpha(x).
\end{align*}
Then $V_n$ satisfies the assumptions of Lemma \ref{multiplication-op2} and  the estimates of the derivatives hold.

\end{proof}

Let $M,g\in W^{4,p}(\R^N)$.
We can write
\[
A(Mg)-MAg={\rm div} P(M)g+Q(M)g
\]
where
\[
(P(M))_i=\sum_{\begin{array}{c}
         |\beta|= 3  \\
    \end{array}}b_{e_i,\beta}D^{\beta}M
\]
and
\[
Q(M)=\sum_{\begin{array}{c}
         1\leq |\beta|\leq 3  \\
         0\leq |\gamma|\leq 3
    \end{array}}c_{\beta,\gamma}D^{\beta}MD^{\gamma}
\]
for suitable $b_{\alpha,\beta}$ and $c_{\beta,\gamma}$. 

\begin{theorem}\label{dom-gen2}
Fix $p\in (1,\infty)$. Assume that
$0\leq V\in W^{3,1}_{\mathrm{loc}}(\R^N)$ satisfies the assumptions \eqref{condV} with $\alpha\in [0,\frac{3}{4})$. 
Then, the operator $A_p+B_p$, defined on the domain
$D(A_p)\cap D(B_p)$, is closed, densely defined and quasi-sectorial. Therefore $-A_p-B_p$  generates an analytic semigroup.
\end{theorem}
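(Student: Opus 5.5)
We would follow the scheme of the proof of Theorem \ref{dom-gen}: apply Proposition \ref{cor2MP} with $A=A_p=L_p+\omega_1 I$ and $B=B_p=V_p+\omega_2 I$. By Proposition \ref{properties-gen}, $A_p$ is invertible, sectorial and has BIP with $\theta_{A}<\pi/2$. By Proposition \ref{properties}(b), $\theta_B=0$. Hence $\theta_A+\theta_B<\pi$, and we may fix $\vartheta_A\in(\theta_A,\pi/2)$ and $\vartheta_B>0$ small with $\vartheta_A+\vartheta_B<\pi$. It then remains to prove the commutator estimate
\[
\|C(\lambda,\mu)\|\le \frac{C}{(1+|\lambda|^{3/4})|\mu|^{2-\alpha}},\qquad \lambda\in\Sigma_{\pi-\vartheta_A},\ \mu\in\Sigma_{\pi-\vartheta_B}.
\]
This gives the hypothesis of Proposition \ref{cor2MP} with $\gamma=1/4<\beta=1-\alpha$, which is exactly where $\alpha<3/4$ enters. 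The closedness and quasi-sectoriality of $A_p+B_p$ on $D(A_p)\cap D(B_p)$ then follow. Density of the domain is clear, since $C_c^\infty\subset W^{4,p}\cap D(V_p)$ because $V$ is locally bounded. For analytic generation, the resolvent estimate of Proposition \ref{cor2MP} on $(\nu_0,\infty)$ is not enough by itself. We would combine it with \cite[Theorem 1]{mon-pru}, which gives sectoriality of $\nu_0+A+B$ with angle at most $\max(\vartheta_A,\vartheta_B)<\pi/2$. This is presumably why the angle $0$ is no longer claimed.

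\textbf{Algebraic step.} With $M=(\mu+\omega_2+V)^{-1}$ and $g=A_p^{-1}f$, write
\[
C(\lambda,\mu)f=(\lambda+A_p)^{-1}(Mf)-A_p(\lambda+A_p)^{-1}(Mg)=(\lambda+A_p)^{-1}\bigl[MA_pg-A_p(Mg)\bigr].
\]
This identity requires $Mg\in D(A_p)=W^{4,p}$, but $M$ is only $W^{3,\infty}$. So we first replace $M$ by the smooth $M_n$ of Lemma \ref{approx-multiplication-op}, which have uniform bounds, and then pass to the limit at the end. For smooth $M_n$ we use the decomposition
\[
A(M_ng)-M_nAg={\rm div}\,P(M_n)g+Q(M_n)g.
\]
In $P(M_n)g$ only third derivatives of $M_n$ multiplied by $g$ should appear, and $Q$ involves $D^\beta M_n$ with $1\le|\beta|\le3$ times $D^\gamma g$ with $|\gamma|\le3$. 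The divergence term is essential: it absorbs the fourth derivatives of $M_n$, which we do not control.

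\textbf{Estimates.} The divergence term is controlled by Lemma \ref{a-priori-gen}:
\[
\|(\lambda+A_p)^{-1}D_i(P(M_n)g)_i\|_p\le C|\lambda|^{-3/4}\,\|(P(M_n)g)_i\|_{W^{1,p}}.
\]
This in turn is bounded by $C|\lambda|^{-3/4}|\mu|^{\alpha-2}\|g\|_{W^{4,p}}$, using $\|D^hM_n\|_\infty\le C|\mu|^{\alpha-2}$ for $1\le|h|\le3$, the bounded coefficients ($a_{\alpha,\beta}\in W^{2,\infty}$ for the top order ones), and $\|g\|_{W^{4,p}}\le C\|f\|_p$. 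There is a subtlety here: $W^{1,p}$ of $P(M_n)g$ would involve $D^4M_n$. We would therefore arrange the decomposition so that $P$ contains $D^2M_n$ and $D^3 M_n$ coefficients differentiated only once. Alternatively, we would write the offending piece in the form ${\rm div}(\cdot)$ applied to terms containing at most $D^3M_n$, with $g$ and $Dg$ as factors, so that only the $L^p$ norm is needed. The $Q$ term is bounded via the resolvent estimate $\|(\lambda+A_p)^{-1}\|\le C/(1+|\lambda|)$ by $C(1+|\lambda|)^{-1}|\mu|^{\alpha-2}\|f\|_p$. For small $|\lambda|$ we use instead boundedness of $(\lambda+A_p)^{-1}$ near $0$, since $A_p$ is invertible.

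\textbf{Main obstacle.} The key difficulty is the bookkeeping of the commutator $[A,M]$ for variable coefficients. We must integrate by parts so that no fourth derivative of $M$ survives outside a divergence, and the adjoint-based Lemma \ref{a-priori-gen} only costs $|\lambda|^{-3/4}$ for one such derivative. After this, letting $n\to\infty$ by dominated convergence (pointwise convergence of $D^hM_n$ with uniform bounds) transfers the estimate to $M$.
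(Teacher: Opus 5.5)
Your proposal is correct and follows the paper's proof essentially step for step: Proposition \ref{cor2MP} with $\gamma=\frac14<\beta=1-\alpha$, the smoothed $M_n$ of Lemma \ref{approx-multiplication-op}, the splitting $A(M_ng)-M_nAg={\rm div}\,P(M_n)g+Q(M_n)g$ with $D^3M_n\,g$ inside the divergence, Lemma \ref{a-priori-gen} for that part, the resolvent bound for $Q$, and a limit in $n$. The one thing to clean up is the $W^{1,p}$ detour: the duality proof of Lemma \ref{a-priori-gen} gives $\|(\lambda+A_p)^{-1}D_i\phi\|_p\le C|\lambda|^{-3/4}\|\phi\|_p$, so $(\lambda+A_p)^{-1}{\rm div}$ is bounded on $L^p$ and only $\|P(M_n)g\|_p$ is needed; this is your second alternative and what the paper does, whereas your first alternative cannot work, since absorbing the $a\,D^4M_n\,g$ term forces a $D^3M_n$ factor inside a first-order divergence.
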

\begin{proof}
Let $\vartheta>\theta_{A}$, $\lambda\in \Sigma_{\pi-\vartheta}$, 
$\mu\in \Sigma_\pi$
and $f\in L^p(\R^N)$. Set $M(x)=\frac{1}{\mu+\omega_2+V(x)}$, $M_n$ as in Lemma \ref{approx-multiplication-op} and consider the associated multiplication operators $M_p$ and $M_{n,p}$. 
Observe that $(\mu+B_p)^{-1}=M_p$. 

Consider the commutator
\begin{align*}
C(\lambda, \mu )f &= A_p(\lambda+A_p)^{-1}\left[A_p^{-1}(\mu + B_p)^{-1} - (\mu+B_p)^{-1}A_p^{-1}\right]f\\
&= A_p(\lambda+A_p)^{-1}\left[A_p^{-1}M_p - M_pA_p^{-1}\right]f
\end{align*}
and     
\begin{eqnarray*}
C_n(\lambda, \mu )f = A_p(\lambda+A_p)^{-1}\left[A_p^{-1}M_{n,p} - M_{n,p}A_p^{-1}\right]f.
\end{eqnarray*}
We first prove that $C_n(\lambda, \mu )f$ converges strongly in $L^p(\R^N)$, then we prove that it converges to $C(\lambda, \mu )f$.
Since $M_n\in W^{4,\infty}(\R^N)$ we can write
\begin{align*}
    C_n(\lambda,\mu)f=-(\lambda+A_p)^{-1}[A_pM_{n,p}-M_{n,p}A_p]A_p^{-1}f.
\end{align*}
Let $g=A_p^{-1}f\in D(A)=W^{4,p}(\R^N)$.
We have
\[
A(M_ng)-M_nAg={\rm div} P(M_n)g+Q(M_n)g
\]
then
\begin{align*}
& -C_{n}(\lambda, \mu)f
 =  (\lambda + A_p)^{-1} \bigg[
 {\rm div} P(M_n)g+Q(M_n)g\bigg].\\
  \end{align*}
We observe that by Lemma \ref{approx-multiplication-op}
it holds
\[
\|P(M_n)g\|_p\leq \frac{C}{|\mu|^{2-\alpha}}\|g\|_{W^{4,p}(\R^N)}
\]
and
\[
\|Q(M_n)g\|_p\leq \frac{C}{|\mu|^{2-\alpha}}\|g\|_{W^{4,p}(\R^N)}.
\]
Moreover, by Lemma \ref{approx-multiplication-op} again, 
$P(M_n)g\to P(M)g$ and
$Q(M_n)g\to Q(M)g$ strongly in $L^p(\R^N)$.
By Lemma \ref{a-priori-gen}  we have
\[
\|S\|=\|(A_p+\lambda)^{-1}{\rm div}\|_p\leq C\frac{1}{|\lambda| ^{3/4}}.
\]
Then we can write
\begin{align*}
& -C_{n}(\lambda, \mu)f
 =  
 S P(M_n)g+(\lambda+A_p)^{-1}Q(M_n)g.\\
  \end{align*}
By the a-priori estimates, we have $\|D_ig\|_p\leq C\|f\|_p,\|D_{ij}g\|_p\leq C\|f\|_p,\|D_{ijk}g\|_p\leq C\|f\|_p$ for a constant $C$ depending on $\omega_1$.
Finally,
\[
\|C_n(\lambda,\mu)f\|\leq C\left(
\frac{1}{|\lambda|^{3/4}}+\frac{1}{1+|\lambda|}
\right)\frac{1}{|\mu|^{2-\alpha}}\|f\|_p
\leq C\frac{1}{1+|\lambda| ^{3/4}}\frac{1}{|\mu|^{2-\alpha}}\|f\|_p
\]
and, moreover,
$C_n(\lambda,\mu)f$ converges strongly in $L^{p}(\R^N)$.
Now it remains to prove that $C_n(\lambda,\mu)f$ 
\[
C_n(\lambda, \mu )f = A_p(\lambda+A_p)^{-1}\left[A_p^{-1}M_{n,p} - M_{n,p}A_p^{-1}\right]f 
\]
converges to $C(\lambda,\mu)f$ in $L^p(\R^N)$.
Set
\[
h_n=(\lambda+A_p)^{-1}\left[A_p^{-1}M_{n,p} - M_{n,p}A_p^{-1}\right]f.
\]
Since the sequence of functions $M_n$ converges pointwisely to $M$ and it is uniformly bounded, we have that $M_{n,p}f$ converges to $M_pf$,
 $M_{n,p}A^{-1}_pf$ converges to $M_pA^{-1}_pf$ and $h_n$ converges strongly in $L^p(\R^N)$ to
$h=(\lambda+A_p)^{-1}\left[A_p^{-1}M_p - M_pA_p^{-1}\right]f$. Then, since
$-C_n(\lambda,\mu)f=A_ph_n$ converges in $L^p(\R^N)$ and  $A_p$ is closed, we have that $h\in D_p(A)$ and
$
C_n(\lambda,\mu)f\to -A_ph=C(\lambda,\mu)f.
$
Moreover we also have
\begin{align*}
& -C(\lambda, \mu)f
 =  
 S P(M)g+(\lambda+A_p)^{-1}Q(M)g.\\
  \end{align*}
Observe that we also obtained
that the commutator is bounded by
\[
\|C(\lambda, \mu)\|\leq C\frac{1}{1+|\lambda| ^{1-1/4}}\frac{1}{\mu^{1+\beta}}
\]
where $\beta = 1-\alpha$.
The hypothesis of Proposition \ref{cor2MP}   are satisfied with  $0\leq \frac{1}{4}<1-\alpha\leq 1$ and the claim follows.
\end{proof}

%
\bibliographystyle{amsplain}

\bibliography{bibfile}

\end{document}